\numberwithin{equation}{section}
\declaretheorem[name=Theorem,numberwithin=section]{theorem}
\newtheorem{proposition}[theorem]{Proposition}
\newtheorem{corollary}{Corollary}[theorem]
\newtheorem{lemma}[theorem]{Lemma}
\theoremstyle{definition}
\newtheorem{definition}[theorem]{Definition}
\theoremstyle{remark}
\newcommand{\C}{\mathbb{C}}
\renewcommand{\S}{\mathcal{S}_{n}}
\newcommand{\ds}{\displaystyle}
\newcommand{\T}{\mathcal{T}_{n}}
\DeclarePairedDelimiter{\floor}{\lfloor}{\rfloor}
\title{Structure of singular and nonsingular tournament matrices}
\author{Matt Burnham}
\date{}
\begin{document}

\maketitle

\begin{abstract}
A tournament is a directed graph resulting from an orientation of the complete graph; so, if $M$ is a tournament's adjacency matrix, then $M + M^T$ is a matrix with $0$s on its diagonal and all other entries equal to $1$. An outstanding question in tournament theory asks to classify the adjacency matrices of tournaments which are singular (or nonsingular). We study this question using the structure of tournaments as graphs, in particular their cycle structure. More specifically, we find, as precisely as possible, the number of cycles of length three that dictates whether the corresponding tournament matrix is singular or nonsingular. We also give structural classifications of the tournaments that have the specified numbers of cycles of length three.  
\end{abstract}

\section{Introduction}

A \underline{tournament matrix} of order $n$ is an $n \times n$ $(0,1)$-matrix $M = [m_{ij}]$ which satisfies
$$M + M^T = J_n - I_n,$$
where $J_n$ denotes the $n \times n$ matrix of all 1s and $I_n$ denotes the $n \times n$ identity matrix. A \underline{tournament} of order $n$ is a digraph obtained by arbitrarily orienting each edge of the complete graph on $n$ vertices. Thus a tournament matrix is merely the adjacency matrix of a tournament. Whenever a tournament has property $P$, we say that the corresponding tournament matrix has property $P$ and vice versa. We denote the set of order $n$ tournament matrices by $\T$ and the set of order $n$ singular tournament matrices by $\S$. We denote the number of $3$-cycles in a tournament matrix $M$ by $C_3(M)$.

The \underline{score vector} of an order $n$ tournament matrix $M$ is $s := M\bm{1}_n$ where $\bm{1}_n \in \C^n$ is the vector of 1s. An \underline{ordered score vector} is a score vector in nondecreasing order. In this paper, we refer to an ordered score vector simply as a score vector. From the score vector, we can compute the number of $3$-cycles using the following well-known formula.
\begin{proposition}[Score Vector Formula for $3$-Cycles]
Let $M \in \T$ and $s = (s_1,...,s_n)$ its score vector. The number of three cycles in $M$ is
$$C_3(M) = \binom{n}{3} - \sum_{i=1}^n \binom{s_i}{2}.$$
\end{proposition}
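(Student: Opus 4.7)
The plan is to count by complementation: every unordered triple of vertices in a tournament spans exactly three directed edges, and those three edges must form either a directed $3$-cycle or a transitive triangle. Thus if $T$ denotes the number of transitive triangles, I will establish $C_3(M) + T = \binom{n}{3}$ and then show that $T = \sum_{i=1}^n \binom{s_i}{2}$, from which the formula falls out.

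The first identity is immediate from the two-case classification above. For the second, I would observe that in a transitive triangle on vertices $\{a,b,c\}$ there is a unique vertex (the ``source'') which has out-degree $2$ within the triple, whereas in a directed $3$-cycle every vertex has out-degree exactly $1$ within the triple. Therefore, if I count pairs $(i, \{j,k\})$ such that $i$ beats both $j$ and $k$, each transitive triangle contributes exactly one such pair and each $3$-cycle contributes none. On the other hand, vertex $i$ beats exactly $s_i$ other vertices by the definition of the score vector, so the number of pairs $\{j,k\}$ that $i$ beats simultaneously is $\binom{s_i}{2}$. Summing over $i$ yields $T = \sum_{i=1}^n \binom{s_i}{2}$.

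Combining the two identities gives the desired formula. There is no real obstacle here; the only subtlety to be careful about is the double-counting bookkeeping, namely verifying that the ``source'' vertex of a transitive triangle is unique and that a $3$-cycle contributes $0$ rather than $1$ to $\sum_i \binom{s_i}{2}$ when one restricts $i$ to range over the triple. Both are immediate from the structure of three-vertex tournaments, of which there are, up to isomorphism, only two.
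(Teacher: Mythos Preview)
Your argument is correct and is the standard proof of this classical identity. Note that the paper does not actually supply a proof of this proposition; it is stated as a ``well-known formula'' and used as a black box thereafter. So there is no paper proof to compare against, but your complementation argument (classifying each triple as cyclic or transitive, then biject transitive triples with dominated pairs via the unique source vertex) is exactly the canonical one and would be perfectly appropriate to include.
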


For two vertices $u$ and $v$ in a digraph, if there is an arc from $u$ to $v$, then we say that $u$ beats $v$ and we write $u \to v$. A \underline{transitive tournament} is a tournament such that if $u \to v$ and $v \to w$, then $u \to w$. An order $n$ transitive tournament matrix has the score vector $(0,1,2,...,n-1)$. A \underline{regular tournament} is a tournament whose vertices all have the same outdegree. An order $n$ regular tournament matrix must have odd order and has the score vector $(\frac{n-1}{2},...,\frac{n-1}{2})$. An \underline{almost regular} tournament is a tournament whose maximum difference in outdegrees between vertices is 1. An order $n$ almost regular tournament matrix must have even order and the score vector $(\frac{n}{2}-1,...,\frac{n}{2}-1,\frac{n}{2},...,\frac{n}{2})$.

A pair of vertices in a digraph are called \underline{strongly connected} if there exists a directed walk from one to the other and vice versa. A digraph is called \underline{strongly connected} or \underline{strong} if each pair of vertices is strongly connected. A digraph is called \underline{Hamiltonian} if it contains a directed cycle that traverses through all the vertices, i.e. if it contains a \underline{Hamiltonian cycle}.

Strongly connected is an equivalence relation on the set of vertices in a digraph. Thus a tournament may be partitioned into strongly connected components. The \ul{strongly connected component} in a digraph $D$ containing the vertex $v$ is the induced subdigraph which contains all vertices of $D$ which are strongly connected to $v$. Throughout this paper, we denote the strongly connected components of a tournament matrix $M$ by $M(V_1),...,M(V_k)$ where $V_1,...,V_k$ are their vertex sets.

An \underline{upset tournament} is generated by taking a transitive tournament, finding a directed path from the vertex of degree $n-1$ to the vertex of degree 0, and switching the directions of the arcs on that path. An order $n$ upset tournament has the score vector $(1,1,2,...,n-3,n-2,n-2)$. Burzio showed in \cite{Burzio} that the strongly connected tournaments which contain the least number of $3$-cycles are precisely the upset tournaments.

Colloquially, for a fixed $n$, tournament matrices are on a spectrum from `transitive' to `regular'. $C_3$ is a measurement of this spectrum: transitive tournaments have the least number of $3$-cycles, while regular and almost regular tournaments have the most number of $3$-cycles. Transitive tournament matrices are singular. Regular and almost regular tournament matrices are nonsingular for $n \ge 3$. Thus singular tournament matrices attain an interesting maximum of $C_3$ and nonsingular tournament matrices attain an interesting minimum of $C_3$.

In this paper, we investigate the extrema in Figure \ref{fig:Spectrum}.

\begin{figure}[H]
\centering
\begin{tikzpicture}[scale=1.5,>=stealth,
every node/.style={align=center,scale=.8}] 
\draw[black,line width=1pt] (1,0)--(8,0);
\foreach \i in {1, 3, 6, 8}
\draw[black, ultra thick] (\i,.1)--(\i,-.1);
\draw (1,0.2) node[above]{Transitive};
\draw (8,0.2) node[above]{Regular};
\draw (2,-0.1) node[below]{Singular};
\draw (4.5,-0.1) node[below]{Singular/Nonsingular};
\draw (7,-0.1) node[below]{Nonsingular};
\draw (3,0.2) node[above]{Nonsingular \\ Minimum};
\draw (6,0.2) node[above]{Singular \\ Maximum};
\end{tikzpicture}
\caption{Transitive-Regular Spectrum with Extrema}
\label{fig:Spectrum}
\end{figure}

We give structural classifications of nonsingular tournament matrices which minimize $C_3$ and the number of $3$-cycles they contain in \Cref{thm:nonsingularextreme}.

\begin{restatable*}{theorem}{nonsingularextreme}
\label{thm:nonsingularextreme}
Let $n \ge 3$ and let $M \in \T$. If $M \in \T\setminus\S$, then $\ds C_3(M) \ge n - 2\floor*{\frac{n}{3}}$. Furthermore, $M \in \T\setminus\S$ and $\ds C_3(M) = n - 2\floor*{\frac{n}{3}}$ if and only if the strongly connected components of $M$ are upset tournaments and the number of strongly connected components is $\ds\floor*{\frac{n}{3}}$.
\end{restatable*}

Furthermore, we give the number of $3$-cycles of nonsingular tournament matrices which maximize $C_3$ for $n$ even and we give bounds for $n$ odd.

\begin{restatable*}{proposition}{singularextreme}
\label{prop:singularextreme}
If $n$ is even, then 
$$\max_{M \in \S} C_3(M) = \frac{1}{4}\binom{n}{3}.$$ 
If $n$ is odd, then 
$$2\binom{\frac{n+1}{2}}{3} \le \max_{M \in \S} C_3(M) \le \frac{1}{4}\binom{n}{3}.$$
\end{restatable*}

We do not have encompassing structural characterizations of nonsingular tournament matrices which maximize $C_3$. We are cognizant of a trivial subset: tournaments of order $n$ that are obtained by either adding a sink or source to a regular or almost regular tournament of order $n-1$ (see \Cref{thm:TSE}). We prove in \Cref{prop:nontrivialifandonlyifstrong} that singular tournament matrices which maximize $C_3$ are nontrivial if and only if they are strongly connected.

\section{Maximum Number of $3$-Cycles for Singular Tournament Matrices}

The following proposition has been discovered a multitude of times by different authors. We reference Moon \cite{Moon} for a proof and a small list of original discovers.

\begin{proposition}\label{prop:max3cycles}
Every tournament matrix $M \in \T$ satisfies
$$C_3(M) \le \begin{cases} 
\frac{1}{4}\binom{n+1}{3}, & \text{if $n$ is odd}, \\
2\binom{\frac{n}{2}+1}{3}, & \text{if $n$ is even},
\end{cases}$$
and equality holds if and only if $M$ is a regular or almost regular.
\end{proposition}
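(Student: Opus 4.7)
The plan is to apply the Score Vector Formula for $3$-Cycles and reduce the problem to a constrained minimization of $\sum_{i=1}^n \binom{s_i}{2}$. Since the total number of arcs in a tournament of order $n$ is $\binom{n}{2}$, the score vector $(s_1,\dots,s_n)$ of any $M\in\T$ satisfies $\sum_{i=1}^n s_i = \binom{n}{2}$, with each $s_i \in \{0,1,\dots,n-1\}$. The formula
\[
C_3(M) = \binom{n}{3} - \sum_{i=1}^n \binom{s_i}{2}
\]
then shows that maximizing $C_3(M)$ is equivalent to minimizing $\sum_{i=1}^n \binom{s_i}{2}$ subject to this fixed sum.

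The core step is a standard smoothing/convexity argument: the function $x \mapsto \binom{x}{2}$ on the integers has strictly increasing discrete differences, so if two score entries satisfy $s_i \ge s_j + 2$, then replacing them by $s_i - 1$ and $s_j + 1$ keeps the total sum unchanged and strictly decreases $\binom{s_i}{2}+\binom{s_j}{2}$. Iterating this operation, any minimizer must have all entries differing by at most $1$. Combined with the fixed sum $\binom{n}{2}$, this forces the minimizing multiset to be $\{\tfrac{n-1}{2},\dots,\tfrac{n-1}{2}\}$ when $n$ is odd and $\{\tfrac{n}{2}-1,\dots,\tfrac{n}{2}-1,\tfrac{n}{2},\dots,\tfrac{n}{2}\}$ (half and half) when $n$ is even, which are precisely the score vectors of regular and almost regular tournaments.

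Substituting these optimal score vectors back in, I would carry out the routine algebraic simplification: in the odd case,
\[
\binom{n}{3} - n\binom{(n-1)/2}{2} = \frac{n(n-1)(n+1)}{24} = \frac{1}{4}\binom{n+1}{3},
\]
and in the even case,
\[
\binom{n}{3} - \tfrac{n}{2}\binom{n/2-1}{2} - \tfrac{n}{2}\binom{n/2}{2} = \frac{n(n-2)(n+2)}{24} = 2\binom{n/2+1}{3},
\]
yielding the two stated bounds. Equality in the smoothing step is strict, which directly gives the ``if and only if'' conclusion: any $M$ attaining the bound must have score vector equal to the extremal multiset above, hence must be regular (odd $n$) or almost regular (even $n$); conversely, such tournaments exist and plugging in their score vectors shows they saturate the bound.

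The only subtlety I would be careful about is the reverse direction of the equality case, namely that the extremal score vectors are realizable by actual tournaments. Fortunately the paper already asserts (and gives standard examples of) regular tournaments for odd $n$ and almost regular tournaments for even $n$, so this is handled by invoking the definitions rather than needing to reprove Landau's theorem. The smoothing argument does the remaining work: it shows any other score multiset with the same sum gives a strictly larger value of $\sum \binom{s_i}{2}$, i.e., strictly smaller $C_3(M)$, so the bound cannot be attained outside the regular / almost regular case.
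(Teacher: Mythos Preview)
Your argument is correct: the Score Vector Formula reduces the problem to minimizing $\sum_i \binom{s_i}{2}$ subject to $\sum_i s_i = \binom{n}{2}$, the discrete convexity of $x\mapsto\binom{x}{2}$ makes the smoothing step strictly decrease this sum whenever two entries differ by at least $2$, and the resulting extremal multisets are exactly the regular and almost regular score vectors. Your algebraic simplifications in both parities check out, and you correctly flag that the reverse direction of the equality case only requires the existence of regular and almost regular tournaments, which the paper already asserts.

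As for comparison: the paper does not actually prove this proposition. It states the result and cites Moon for a proof. The argument you give is precisely the classical one that appears in Moon and elsewhere, so you have simply filled in the details the paper chose to outsource; there is no substantive methodological difference to discuss.
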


Next, we investigate the maximum of $C_3$ over singular tournament matrices.

\begin{lemma}\label{lem:maxSing>=maxPrevTourn}
$$\max_{M \in \mathcal{S}_{n+1}} C_3(M) \ge \max_{M \in \T} C_3(M).$$
\end{lemma}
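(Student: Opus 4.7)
The plan is to construct, from any tournament matrix $M \in \T$ that attains $\max_{M \in \T} C_3(M)$, a singular tournament matrix $M' \in \mathcal{S}_{n+1}$ with the same number of $3$-cycles. The obvious candidate is to append a sink: let $v$ be a new vertex and form the tournament of order $n+1$ in which every vertex of $M$ beats $v$. Under our convention $M + M^T = J_n - I_n$, the row of $M'$ corresponding to $v$ is identically zero, so $M'$ is immediately singular, giving $M' \in \mathcal{S}_{n+1}$.

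Next I would verify that $C_3(M') = C_3(M)$. Every $3$-cycle in $M'$ is a directed cycle on three vertices; if $v$ were one of those vertices, then $v$ would have both positive indegree and positive outdegree in that cycle, contradicting the fact that $v$ has outdegree $0$. So every $3$-cycle in $M'$ uses only vertices of $M$ and is already a $3$-cycle of $M$, and conversely every $3$-cycle of $M$ is one of $M'$. Hence $C_3(M') = C_3(M)$, which after taking the maximum over $M \in \T$ yields the claimed inequality
\begin{equation*}
\max_{M' \in \mathcal{S}_{n+1}} C_3(M') \;\ge\; C_3(M') \;=\; C_3(M) \;=\; \max_{M \in \T} C_3(M).
\end{equation*}

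There is no real obstacle here: both the singularity of $M'$ (immediate from the zero row) and the equality of cycle counts (a sink cannot lie on a directed cycle) are essentially one-line observations. One could equivalently append a source instead, since the corresponding column is then zero; the choice does not matter for the argument.
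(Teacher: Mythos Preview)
Your proof is correct and follows essentially the same approach as the paper: take a $C_3$-maximizing $M\in\T$, append a sink (or source) to obtain an order $n+1$ tournament with a zero row (or column), hence singular, and observe that the new vertex cannot lie on any $3$-cycle so the $3$-cycle count is unchanged. Your justification of $C_3(M')=C_3(M)$ is slightly more explicit than the paper's ``straightforward to check,'' but the argument is the same.
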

\begin{proof}
Let $M \in \T$ maximize $C_3$ over $\T$. From the tournament $M$, create a new tournament $N \in \mathcal{T}_{n+1}$ which contains all the vertices and arcs of $M$, plus an additional vertex that is a sink (resp. source); a vertex that is beaten by all other vertices (resp. beats all other vertices). It is straightforward to check that $C_3(N) = C_3(M).$ By construction, the row (resp. column) of $N$ corresponding to the sink (resp. source) is a row of 0s (resp. a column of 0s). Therefore, $N \in \mathcal{S}_{n+1}$.
\end{proof}

Shader showed in \cite{Shader} (Corollary 3.2) that if $C_3(M) > \ds\frac{1}{4}\binom{n}{3}$, then $M$ is nonsingular. Taking the contrapostive yields the following lemma.

\begin{lemma}[Shader's Inequality]\label{lem:Max}
If $M \in \S$, then $C_3(M) \le \ds\frac{1}{4}\binom{n}{3}$.
\end{lemma}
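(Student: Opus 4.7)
The proof plan is immediate: the lemma is nothing more than the logical contrapositive of Shader's Corollary 3.2, as already indicated in the text preceding the statement. I would let $M \in \S$ be arbitrary. Since $\S \subseteq \T$, Shader's result applies to $M$: if it were the case that $C_3(M) > \tfrac{1}{4}\binom{n}{3}$, then $M$ would be forced to be nonsingular. But $M$ is singular by hypothesis, so this must fail, and therefore $C_3(M) \le \tfrac{1}{4}\binom{n}{3}$.

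No mathematical content beyond Shader's theorem is required. The only thing to check is the inclusion $\S \subseteq \T$, which is immediate from the definition of $\S$ as the set of singular members of $\T$; so the argument is essentially a one-liner and there is no obstacle to overcome.

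Were one interested in a self-contained derivation of Shader's inequality, the natural route would go through the Score Vector Formula $C_3(M) = \binom{n}{3} - \sum_i \binom{s_i}{2}$: the task would become extracting a sufficiently strong lower bound on $\sum_i \binom{s_i}{2}$ from the existence of a nonzero vector in the kernel of $M$, perhaps by combining such a null vector with the identity $M + M^T = J_n - I_n$ or with the spectral structure it imposes on $M$ (its eigenvalues of $n-1$ and $-1$). That analysis is the substance of Shader's paper and is not reproduced here; for the present lemma it is unnecessary.
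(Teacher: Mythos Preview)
Your proposal is correct and matches the paper exactly: the paper gives no separate proof of this lemma, simply noting in the preceding sentence that it is the contrapositive of Shader's Corollary~3.2. There is nothing further to add.
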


The following proposition gives a precise quantity for $\ds\max_{M \in \S}C_3(M)$ when $n$ is even and bounds when $n$ is odd.

\singularextreme

\begin{proof}
If $n$ is even, then by Shader's inequality, 
$$\frac{1}{4}\binom{n}{3} = \max_{M \in \mathcal{T}_{n-1}} C_3(M) \le \max_{M \in \S} C_3(M) \le \frac{1}{4}\binom{n}{3}$$
whence $\ds\max_{M \in \S} C_3(M) = \frac{1}{4}\binom{n}{3}$. If $n$ is odd, then
$$2\binom{\frac{n+1}{2}}{3} = \max_{M \in \mathcal{T}_{n-1}} C_3(M) \le \max_{M \in \S} C_3(M)\le \frac{1}{4}\binom{n}{3}.$$
\end{proof}

Almost nothing is known about the tournament matrices that maximize $C_3$ over $\S$. Some of these matrices are obtained by adding a vertex to a regular or almost regular tournament of order $n-1$ (see the proof of \Cref{lem:maxSing>=maxPrevTourn} and see \Cref{thm:TSE}). The first counterexample is at $n = 7$. From computer computation, there are exactly $3$ nonisomorphic singular tournament matrices which maximize $C_3$ over $\mathcal{S}_7$. These tournament matrices have the score vector $(1,2,2,3,4,4,5)$ and are
\begin{equation*}
    \begin{bmatrix} 
    0 & 1 & 0 & 0 & 0 & 0 & 0 \\
    0 & 0 & 1 & 0 & 0 & 1 & 0 \\
    1 & 0 & 0 & 1 & 0 & 0 & 0 \\
    1 & 1 & 0 & 0 & 1 & 0 & 0 \\
    1 & 1 & 1 & 0 & 0 & 1 & 0 \\
    1 & 0 & 1 & 1 & 0 & 0 & 1 \\
    1 & 1 & 1 & 1 & 1 & 0 & 0 \end{bmatrix}, \qquad \begin{bmatrix} 
    0 & 0 & 1 & 0 & 0 & 0 & 0 \\
    1 & 0 & 0 & 0 & 0 & 1 & 0 \\
    0 & 1 & 0 & 1 & 0 & 0 & 0 \\
    1 & 1 & 0 & 0 & 1 & 0 & 0 \\
    1 & 1 & 1 & 0 & 0 & 1 & 0 \\
    1 & 0 & 1 & 1 & 0 & 0 & 1 \\
    1 & 1 & 1 & 1 & 1 & 0 & 0 \end{bmatrix}, \qquad \begin{bmatrix} 
    0 & 0 & 1 & 0 & 0 & 0 & 0 \\
    1 & 0 & 0 & 1 & 0 & 0 & 0 \\
    0 & 1 & 0 & 0 & 0 & 1 & 0 \\
    1 & 0 & 1 & 0 & 1 & 0 & 0 \\
    1 & 1 & 1 & 0 & 0 & 0 & 1 \\
    1 & 1 & 0 & 1 & 1 & 0 & 0 \\
    1 & 1 & 1 & 1 & 0 & 1 & 0 \end{bmatrix}.
\end{equation*}

\begin{definition}
A singular tournament matrix $M \in \S$ is called a \ul{singular maximizer of $C_3$} if $C_3(M) = \ds \max_{N \in \S} C_3(N)$. A singular maximizer of $C_3$ is called \ul{trivial} if it is obtained by adding a vertex to a regular or almost regular tournament of order $n-1$.
\end{definition}

We classify trivial singular maximizers of $C_3$ in the next proposition; however, we require the following lemma to do so. The following lemma may be proven using the score vector formula for $3$-cycles.

\begin{lemma}
Let $T_1$ be a tournament with score vector $s = (s_1,...,s_n)$ and $v_i \to v_j$. Let $T_2$ be the tournament obtained by reversing the arc between $v_i$ and $v_j$ in $T_1$. Then $C_3(T_2) - C_3(T_1) = s_i - s_j - 1$.
\end{lemma}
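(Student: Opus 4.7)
The plan is to apply the score vector formula from Proposition 1.1 directly, computing $C_3(T_2)$ and $C_3(T_1)$ in terms of the binomial coefficients of their respective score vectors. The key observation is that reversing a single arc alters only two coordinates of the score vector.

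First I would determine the score vector of $T_2$. Since reversing the arc $v_i \to v_j$ causes $v_i$ to lose exactly one win and $v_j$ to gain exactly one win, while all other win counts remain fixed, the score vector of $T_2$ is $(s_1,\dots,s_i-1,\dots,s_j+1,\dots,s_n)$ (with all other entries unchanged). Note that we do not need the vector to be in nondecreasing order for the formula to apply; the formula sums over all vertices and is invariant under reordering.

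Next I would substitute into the formula $C_3(M) = \binom{n}{3} - \sum_{k} \binom{s_k}{2}$. The $\binom{n}{3}$ term cancels in the difference, as do all terms with index $k \ne i,j$. Thus
\begin{equation*}
C_3(T_2) - C_3(T_1) = \binom{s_i}{2} + \binom{s_j}{2} - \binom{s_i-1}{2} - \binom{s_j+1}{2}.
\end{equation*}
Using the elementary identity $\binom{a}{2} - \binom{a-1}{2} = a-1$, the right-hand side collapses to $(s_i - 1) + (-s_j) = s_i - s_j - 1$, as claimed.

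There is no real obstacle here: the argument is a short computation once one notes the change to the score vector. The only minor subtlety is ensuring that the binomial identity is valid when $s_i = 0$ or $s_j = n-1$, but in such cases the reversed arc could not have existed in the first place (one cannot have $v_i \to v_j$ if $s_i = 0$), so these boundary cases do not arise.
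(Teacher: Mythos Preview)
Your proof is correct and follows exactly the approach the paper indicates (the paper does not write out a proof but states that the lemma ``may be proven using the score vector formula for $3$-cycles''). The computation and the handling of the boundary cases are both fine.
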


\begin{theorem}\label{thm:TSE}
Let $M \in \S$ be a singular maximizer of $C_3$. The following conditions are equivalent:
\begin{enumerate}[\normalfont(i)]
    \item $M$ is a trivial singular maximizer of $C_3$;
    \item $M$ is obtained by adding a source or a sink to a regular or almost regular tournament of order $n-1$;
    \item $M$ has a strongly connected component of order $1$.
\end{enumerate}
\end{theorem}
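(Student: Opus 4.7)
My plan is to prove the cyclic chain (ii) $\Rightarrow$ (i), (ii) $\Rightarrow$ (iii), (iii) $\Rightarrow$ (ii), and (i) $\Rightarrow$ (ii). The two implications leaving (ii) are immediate from the definitions: a source or a sink is one admissible choice of added vertex (so $M$ is trivial), and such a vertex is itself a singleton strongly connected component. The substance lies in the two converses, which I take in turn.

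For (iii) $\Rightarrow$ (ii), I will let $\{v\}$ be a singleton SCC of $M$ and set $M' = M - v$. Since every $3$-cycle of a tournament lies within a single strongly connected component, $C_3(M') = C_3(M)$. Combining \Cref{lem:maxSing>=maxPrevTourn} with $M' \in \mathcal{T}_{n-1}$ yields
\begin{equation*}
\max_{N \in \mathcal{T}_{n-1}} C_3(N) \;\le\; C_3(M) \;=\; C_3(M') \;\le\; \max_{N \in \mathcal{T}_{n-1}} C_3(N),
\end{equation*}
which forces $M'$ to attain the maximum in \Cref{prop:max3cycles}, so $M'$ is regular (if $n-1$ is odd) or almost regular (if $n-1$ is even). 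Since such tournaments of order at least three are strongly connected (via the standard score-sequence criterion), the condensation of $M$ collapses to exactly the two nodes $\{v\}$ and $V(M')$, forcing $v$ to be a source or a sink of $M$. The boundary case $n = 3$ is handled directly: the only singular maximizer of $\mathcal{S}_3$ is the transitive tournament, which is itself a source (or sink) adjoined to the order-$2$ almost regular tournament.

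For (i) $\Rightarrow$ (ii), I will write $M = T + v$ with $T$ regular or almost regular of order $n-1$ and $v$ of outdegree $d$, and rule out $1 \le d \le n-2$ by contradiction. Beginning with the tournament in which $v$ is a source and successively reversing each of the $n-1-d$ arcs from $v$ to the eventual beater set $B$, the arc-reversal lemma stated just above the theorem telescopes to
\begin{equation*}
C_3(M) - C_3(T) \;=\; \tfrac{(n-1-d)(n-2+d)}{2} \;-\; \sum_{u \in B} s^{T}_u.
\end{equation*}
When $n$ is even the common score $(n-2)/2$ of $T$ reduces this to $d(n-1-d)/2 > 0$, and since $C_3(T) = \tfrac{1}{4}\binom{n}{3}$ this already contradicts Shader's inequality (\Cref{lem:Max}) for the singular $M$. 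When $n$ is odd the sum splits via the number $b_2$ of high-score vertices in $B$; using the bound $b_2 \le \min(n-1-d,\,(n-1)/2)$ together with $\tfrac{1}{4}\binom{n}{3} - C_3(T) = (n-1)/8$, the inequality $C_3(M) > \tfrac{1}{4}\binom{n}{3}$ reduces to a short arithmetic check in $n$ and $d$ that holds for all $n \ge 5$. The small cases $n = 3, 4$ are verified by inspection.

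The hardest part will be the odd-$n$ branch of (i) $\Rightarrow$ (ii): the score sequence of an almost regular $T$ takes two distinct values, so $\sum_{u \in B} s^T_u$ depends not only on $d$ but on how $B$ distributes between the two score classes. Isolating the worst case and confirming the required inequality requires carrying the auxiliary parameter $b_2$ through the computation rather than bounding in $d$ alone.
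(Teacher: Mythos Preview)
Your plan is correct and follows essentially the same route as the paper. Both proofs hinge on the arc-reversal lemma to show that an added vertex of intermediate outdegree would push $C_3(M)$ above Shader's bound $\tfrac{1}{4}\binom{n}{3}$, splitting into the regular (even $n$) and almost regular (odd $n$) cases; your source-based telescoping and the paper's sink-based telescoping are mirror images of each other, and your odd-$n$ endpoint check $\tfrac{n-3}{2} > \tfrac{n-1}{8}$ is exactly the inequality the paper isolates. The one genuine difference is that you prove (iii) $\Rightarrow$ (ii) directly---observing that the (almost) regular $M'$ is strongly connected for $n-1\ge 3$, so the condensation of $M$ has only two nodes and $v$ must be a source or sink---whereas the paper proves only (iii) $\Rightarrow$ (i) and relies on the arc-reversal argument (i) $\Rightarrow$ (ii) to close the loop. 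Your shortcut is a small but pleasant improvement; the overall content is the same.
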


\begin{proof}
(ii) $\implies$ (iii) is immediate.

(i) $\implies$ (ii) By definition, there exists a regular or almost regular tournament $R$ of order $n-1$, with vertex set $\{v_1,...,v_{n-1}\}$, such that $M = R + v_n$. Let $N$ be the tournament obtained by adding a vertex $v_n$ to $R$ that is a sink; that is, $v_i \to v_n$ for all $i \neq n$. Then $M$ is obtained by reversing some number of arcs which end at $v_n$, let's say $k$, in $N$.  We split up the proof into two cases: when $R$ is regular and when $R$ is almost regular. \\
\underline{Case 1:} $R$ is regular. \\
By the previous lemma,
$$C_3(M) - C_3(N) = \sum_{i=0}^{k-1}\left[\frac{n}{2} - i - 1\right] = \frac{n-k-1}{2}k.$$
Since $R$ is regular, we know that $n$ is even. $N$ is also a singular maximizer of $C_3$, so by \Cref{prop:singularextreme},
$$C_3(M) = C_3(N) + \frac{n-k-1}{2}k = \frac{1}{4}\binom{n}{3} + \frac{n-k-1}{2}k.$$
Note that for $M$ to be a nonsingular minimizer of $C_3$, $\ds\frac{n-k-1}{2}k$ must equal $0$, which only happens when $k = 0$ or $k = n-1$. Therefore the added vertex $v_n$ in $M$ must be a sink or source. \\
\underline{Case 2:} $R$ is almost regular. \\
Let $k = k_1 + k_2$ where $k_1$ of the arcs begin at a vertex with degree $\ds\frac{n+1}{2}$ and $k_2$ with degree $\ds\frac{n-1}{2}$. By the previous lemma,
$$C_3(M) - C_3(N) = \sum_{i=0}^{k_2-1}\left[\frac{n-1}{2} -i - 1\right] + \sum_{i=k_2}^{k-1}\left[\frac{n+1}{2} -i - 1\right] = \frac{n-k}{2}k - k_2.$$
Since $R$ is almost regular, we know that $n$ is odd and hence $\ds C_3(N) = 2\binom{\frac{n+1}{2}}{3}$. By \Cref{prop:singularextreme},
$$C_3(M) - C_3(N) = \frac{n-k}{2}k - k_2 \le \frac{1}{4}\binom{n}{3} - 2\binom{\frac{n+1}{2}}{3} = \frac{n-1}{8}.$$
We know that this inequality holds for $k = 0$ and $k = n-1$; that is, when $v_n$ is a source or a sink. Assume, by way of contradiction, that the inequality holds for some $1 \le k \le n-2$. We know that $\ds k_2 \le \min\left\{k,\frac{n-1}{2}\right\}$. Thus
$$\frac{n-k}{2}k - \min\left\{k,\frac{n-1}{2}\right\} \le \frac{n-k}{2}k - k_2.$$
It is straightforward to show that the function on the LHS is minimized when $k = 1$ and when $k = n-2$, and that its minimum is $\ds\frac{n-3}{2}$. The inequality
$$\frac{n-3}{2} \le \frac{n-k}{2}k - \min\left\{k,\frac{n-1}{2}\right\} \le \frac{n-k}{2}k - k_2 \le \frac{n-1}{8}$$
implies that $n \le 3$. It is easy to check that the proposition holds for $n \le 3$.

(iii) $\implies$ (i) Assume that $M$ has a strongly connected component of order $1$. Let $M(V_1),...,M(V_k)$ be the strongly connected components of $M$. Assume, without loss of generality, that $M(V_k)$ is the strongly connected component of order $1$. Let $N$ be a tournament matrix whose strongly connected components are $M(V_1),...,M(V_{k-1})$. The order of $N$ is $n-1$. By \Cref{prop:singularextreme} and \Cref{prop:max3cycles}, 
$$\begin{cases} 
\frac{1}{4}\binom{n}{3}, & \text{if $n$ is even}, \\
2\binom{\frac{n+1}{2}}{3}, & \text{if $n$ is odd},
\end{cases} \le C_3(M) = C_3(N) \le \begin{cases} 
\frac{1}{4}\binom{n}{3}, & \text{if $n$ is even}, \\
2\binom{\frac{n+1}{2}}{3}, & \text{if $n$ is odd},
\end{cases}.$$
In order for equality to hold, $N$ must be a regular or almost regular tournament of order $n-1$.
\end{proof}

\begin{lemma}
If $M \in \T$ ($n \ge 6$) is not strongly connected and the strongly connected components of $M$ each contain at least $3$ vertices, then 
$$C_3(M) \le \binom{n-2}{3} + 1.$$
\end{lemma}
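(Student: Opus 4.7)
The plan is to exploit the fact that 3-cycles are confined to individual strongly connected components, combine this with the trivial per-component bound $C_3 \le \binom{m}{3}$, and then reduce to an integer-partition optimization solved by convexity.

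First, I would observe that the three vertices of any 3-cycle are pairwise mutually reachable (directly in one direction, and via the third vertex in the other), so each 3-cycle lies inside a single strongly connected component. Letting $M(V_1),\ldots,M(V_k)$ be the strongly connected components and $n_i := |V_i|$, this yields
\[
C_3(M) \;=\; \sum_{i=1}^{k} C_3(M(V_i)) \;\le\; \sum_{i=1}^{k} \binom{n_i}{3},
\]
where each term is bounded using the fact that every 3-element subset of $V_i$ is either a 3-cycle or a transitive triple.

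The remaining task is to show, subject to $\sum_i n_i = n$, each $n_i \ge 3$, and $k \ge 2$, that $\sum \binom{n_i}{3} \le \binom{n-2}{3} + 1$. I would actually prove the stronger bound $\binom{n-3}{3} + 1$ via two rearrangement moves, using convexity of $\binom{\cdot}{3}$. Assuming $n_1 \ge n_2 \ge \cdots \ge n_k$, the moves are:
\begin{enumerate}[\normalfont(a)]
\item If $n_j > 3$ for some $j \ge 2$, replacing $(n_1,n_j)$ by $(n_1+1,n_j-1)$ strictly increases $\sum \binom{n_i}{3}$, since the change equals $\binom{n_1}{2} - \binom{n_j-1}{2} > 0$ whenever $n_1 \ge n_j \ge 2$.
\item If $k \ge 3$, merging a part of size $3$ into the largest part strictly increases $\sum \binom{n_i}{3}$: by the hockey-stick identity the change is $\binom{n_1+2}{2}+\binom{n_1+1}{2}+\binom{n_1}{2} - 1$, which is positive for $n_1 \ge 3$.
\end{enumerate}
Iterating move (a) forces every non-largest part to equal $3$, and iterating move (b) collapses the partition to exactly two parts, leaving the unique extremal partition $(n-3,3)$. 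This gives $\sum \binom{n_i}{3} \le \binom{n-3}{3} + 1 \le \binom{n-2}{3} + 1$.

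The hypothesis $n \ge 6$ guarantees that $(n-3,3)$ is an admissible partition (both parts at least $3$). There is no real obstacle here; the trivial per-component bound is already strong enough to absorb any refined count of 3-cycles (such as Moon's inequality), and the partition optimization is a direct convexity exercise.
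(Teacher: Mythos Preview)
Your argument is correct and proves the lemma as stated, but it follows a genuinely different route from the paper. The paper does not bound each component by the crude $\binom{n_i}{3}$; instead it lumps the first $k-1$ components into a single tournament $N$ of order $\ell$ and applies Moon's bound $C_3(N)\le\tfrac14\binom{\ell+1}{3}$ to $N$ and to the remaining component of order $n-\ell$. This collapses the optimisation to a one-variable quadratic in $\ell$ on $[3,n-3]$, maximised at the endpoints, yielding the sharper conclusion $C_3(M)\le \tfrac14\binom{n-2}{3}+1$. Your approach trades Moon's inequality for an elementary convexity/partition argument and lands at $\binom{n-3}{3}+1$, which is more than enough for the stated bound $\binom{n-2}{3}+1$ but is asymptotically weaker by a factor of $4$. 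Be aware that the paper's very next proposition actually uses the $\tfrac14$-version of the bound (comparing $\tfrac14\binom{n-2}{3}+1$ against $2\binom{n/2+1}{3}$), and your $\binom{n-3}{3}+1$ would not suffice there for large $n$; so while your proof of the lemma is self-contained and avoids citing Moon, it does not slot into the downstream argument without strengthening.
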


\begin{proof}
Let $M(V_1),...,M(V_k)$ be the strongly connected components of $M$. Let $N$ be a tournament matrix whose strongly connected components are $M(V_1),...,M(V_{k-1})$. Let $\ell = |V_1| + \cdots + |V_{k-1}|$ be the order of $N$. By \Cref{prop:max3cycles}, 
$$C_3(N) = \sum_{i=1}^{k-1}C_3(M(V_\ell)) \le \begin{cases} 
\frac{1}{4}\binom{\ell+1}{3}, & \text{if $n$ is odd}, \\
2\binom{\frac{\ell}{2}+1}{3}, & \text{if $n$ is even},
\end{cases} \le \frac{1}{4}\binom{\ell+1}{3}.$$
Therefore,
$$C_3(M) = \sum_{i=1}^{k}C_3(M(V_\ell)) \le \frac{1}{4}\binom{\ell+1}{3} + \frac{1}{4}\binom{n-\ell+1}{3} =  \frac{n}{24}(3\ell^2 - 3n\ell + n^2 - 1).$$
The expression $\frac{n}{24}(3\ell^2 - 3n\ell + n^2 - 1)$ is a quadratic polynomial in terms of $\ell$ with a positive leading coefficient. It is easy to see that $\frac{n}{24}(3\ell^2 - 3n\ell + n^2 - 1)$ is maximized at both endpoints: $\ell = 3$ and $\ell = n-3$. Furthermore, its maximum value is $\ds\frac{1}{4}\binom{n-2}{3} + 1$. Therefore,
$$C_3(M) \le \frac{1}{4}\binom{n-2}{3} + 1.$$
\end{proof}

\begin{proposition}\label{prop:nontrivialifandonlyifstrong}
Let $M \in \S$ ($n \ge 7$) be a singular maximizer of $C_3$. $M$ is nontrivial if and only if $M$ is strongly connected.
\end{proposition}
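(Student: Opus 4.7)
The plan is to show the two directions using the characterization of trivial singular maximizers already established in \Cref{thm:TSE} (namely, trivial $\iff$ has a strongly connected component of order $1$) together with the immediately preceding lemma that bounds $C_3$ when all components are large.

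The reverse direction is essentially immediate: if $M$ is strongly connected, then its unique strongly connected component has order $n \ge 7 > 1$, so $M$ has no strongly connected component of order $1$, and hence by \Cref{thm:TSE} (iii) $\Leftrightarrow$ (i) it is not a trivial singular maximizer.

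For the forward direction, I would argue by contradiction. Assume $M$ is nontrivial but not strongly connected. By \Cref{thm:TSE} (i) $\Leftrightarrow$ (iii), every strongly connected component of $M$ has order at least $2$. Here I would invoke the basic fact that a tournament has no strongly connected component of order exactly $2$ (a two-vertex tournament has a single arc and is therefore not strongly connected); consequently every strongly connected component of $M$ has order at least $3$. Since $M$ is not strongly connected, it has at least two such components, so the preceding lemma applies and yields the upper bound $C_3(M) \le \tfrac{1}{4}\binom{n-2}{3} + 1$ (this is the bound actually obtained in the proof of that lemma).

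The main (and only mildly computational) step is to check that this upper bound strictly contradicts the singular-maximizer lower bound for $n \ge 7$ supplied by \Cref{prop:singularextreme}. For even $n$ I would compute
\begin{equation*}
\tfrac{1}{4}\tbinom{n}{3} - \tfrac{1}{4}\tbinom{n-2}{3} = \tfrac{(n-2)^2}{4} > 1 \qquad (n \ge 5),
\end{equation*}
and for odd $n$, setting aside routine algebra, the difference
\begin{equation*}
2\tbinom{(n+1)/2}{3} - \tfrac{1}{4}\tbinom{n-2}{3} = \tfrac{(n-3)(2n-3)}{8}
\end{equation*}
exceeds $1$ for $n \ge 5$ and certainly for $n \ge 7$. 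Either way the singular maximum strictly exceeds $\tfrac{1}{4}\binom{n-2}{3}+1$, contradicting the lemma's bound on $C_3(M)$; therefore $M$ must be strongly connected. The main obstacle is really just the bookkeeping in that odd-$n$ comparison, plus remembering to rule out strongly connected components of order $2$ so that the preceding lemma is applicable.
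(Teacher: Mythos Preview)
Your argument is correct and follows essentially the same route as the paper: one direction is immediate (the paper just says ``clearly if $M$ is trivial then $M$ is not strongly connected''), and for the other you argue by contradiction, use \Cref{thm:TSE} to force all components to have order $\ge 3$, invoke the preceding lemma for the bound $\tfrac{1}{4}\binom{n-2}{3}+1$, and then compare with the lower bounds of \Cref{prop:singularextreme}. Your explicit even/odd split in the final comparison is in fact cleaner than the paper's single inequality, which quotes $2\binom{n/2+1}{3}$ as a lower bound for the singular maximum without justification.
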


\begin{proof}
Clearly if $M$ is trivial, then $M$ is is not strongly connected. On the other hand, assume, by way of contradiction, that $M$ is nontrivial and $M$ is not strongly connected. The strongly connected components of $M$ each have at least $3$ vertices. By the previous lemma,
$$C_3(M) \le \frac{1}{4}\binom{n-2}{3} + 1.$$
However, is easy to check that,
$$C_3(M) \le \frac{1}{4}\binom{n-2}{3} + 1 < 2\binom{\frac{n}{2}+1}{3} \le C_3(M).$$
Therefore, if $M$ is nontrivial, then $M$ is strongly connected.
\end{proof}

\section{Minimum Number of $3$-Cycles for Nonsingular Tournament Matrices}

The goal of this section is to prove \Cref{thm:nonsingularextreme}. The following proposition may be proven by looking at the condensation of a tournament matrix, but the proof is omitted here.

\begin{proposition}\label{prop:Determinant}
Let $M$ be a tournament matrix and let $M(V_1),...,M(V_k)$ be its strongly connected components. Then $$\det(M) = \det(M(V_1)) \cdots \det(M(V_k)).$$
\end{proposition}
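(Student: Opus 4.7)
The plan is to reduce $M$ to block upper-triangular form via a simultaneous reordering of rows and columns corresponding to the condensation of the tournament. The first step is to establish the key structural fact: between any two distinct strongly connected components $V_i$ and $V_j$, every arc of $M$ with one endpoint in each points in the same direction. Indeed, if one had both $u \to v$ with $u \in V_i$, $v \in V_j$ and $u' \to v'$ with $u' \in V_j$, $v' \in V_i$, then, using directed walks within each component, one could stitch together walks in both directions between any vertex of $V_i$ and any vertex of $V_j$ (travel inside $V_i$ to $u$, cross to $v$, travel inside $V_j$; and similarly for the return trip via $u', v'$). This would force $V_i \cup V_j$ to lie inside a single strong component, contradicting the assumption.

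From this it follows that the condensation, whose vertex set is $\{V_1,\ldots,V_k\}$ with an arc $V_i \to V_j$ whenever some arc of $M$ goes from $V_i$ to $V_j$, is itself a tournament on $k$ vertices. It is also acyclic, since any directed cycle in the condensation would collapse its components into a single strong component. A finite acyclic tournament is transitive, so one can relabel to assume that every vertex of $V_i$ beats every vertex of $V_j$ whenever $i < j$.

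Now let $P$ be the permutation matrix that reorders the vertices to list those of $V_1$ first, then $V_2$, and so on. The resulting matrix has block upper-triangular form
$$PMP^T = \begin{pmatrix} M(V_1) & * & \cdots & * \\ 0 & M(V_2) & \cdots & * \\ \vdots & \vdots & \ddots & \vdots \\ 0 & 0 & \cdots & M(V_k) \end{pmatrix},$$
where the strictly upper off-diagonal blocks are all-ones blocks (only the zero lower-triangular blocks are needed for the determinant). Since $\det(P)\det(P^T) = 1$ and the determinant of a block upper-triangular matrix is the product of the determinants of its diagonal blocks, this yields $\det(M) = \det(M(V_1)) \cdots \det(M(V_k))$.

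The main obstacle is not computational but conceptual: one must carefully justify that all arcs between two distinct strong components of a tournament point in the same direction. Once this is in hand, the block-triangular decomposition and the standard determinant formula finish the argument.
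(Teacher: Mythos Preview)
Your proof is correct and follows exactly the route the paper indicates: the paper omits the proof but remarks that it ``may be proven by looking at the condensation of a tournament matrix,'' which is precisely what you do---show the condensation is an acyclic (hence transitive) tournament, reorder to obtain a block upper-triangular matrix, and read off the determinant as the product over the diagonal blocks.
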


\begin{corollary}\label{cor:SingIffStrCnnCmpts}
A tournament matrix $M$ is singular if and only if one of its strongly connected components is singular.
\end{corollary}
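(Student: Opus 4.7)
The statement is an immediate consequence of \Cref{prop:Determinant}, so the plan is essentially a one-line translation between singularity of $M$ and vanishing of the product $\det(M(V_1))\cdots\det(M(V_k))$. My proof would simply note that $M$ is singular iff $\det(M)=0$, and by \Cref{prop:Determinant} this determinant factors as the product of the determinants of the strongly connected components. A product of complex numbers vanishes if and only if at least one factor vanishes, so $\det(M)=0$ iff $\det(M(V_i))=0$ for some $i$, which is equivalent to $M(V_i)$ being singular.

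The only minor care is that \Cref{prop:Determinant} is stated for every tournament matrix, including the case $k=1$ where $M$ is itself strongly connected and the statement of the corollary degenerates to a tautology (``$M$ is singular iff $M$ is singular''). I would briefly acknowledge this to make the logic airtight, but there is no real obstacle: the forward direction uses that a product of nonzero numbers is nonzero (contrapositive), and the backward direction uses that a product containing a zero factor is zero.

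Since all the work is already done in \Cref{prop:Determinant}, there is no hard step; the ``obstacle,'' if any, is purely notational, namely to make sure the indices $V_1,\ldots,V_k$ range over all strongly connected components of $M$ as defined earlier in the paper, so that the factorization covers $\det(M)$ entirely. I would phrase the proof in two short sentences and end there.
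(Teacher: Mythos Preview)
Your proposal is correct and matches the paper's approach exactly: the corollary is stated immediately after \Cref{prop:Determinant} with no explicit proof, precisely because it follows at once from the factorization $\det(M)=\prod_i \det(M(V_i))$ together with the fact that a product vanishes iff some factor does. There is nothing to add or change.
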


\begin{corollary}\label{cor:nonsingularscc3verts}
If $M$ is a nonsingular tournament matrix, then the strongly connected components of $M$ contain at least three vertices.
\end{corollary}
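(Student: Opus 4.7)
The plan is to prove the contrapositive: if $M$ has a strongly connected component on fewer than three vertices, then $M$ is singular. By \Cref{cor:SingIffStrCnnCmpts}, it suffices to exhibit, in each such case, a strongly connected component whose determinant vanishes.

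First I would handle the trivial size restrictions. A strongly connected component of a tournament is itself a tournament, so its order is at least $1$. If the order is $1$, the corresponding strongly connected component is the $1\times 1$ zero matrix $[0]$, which is singular. If the order is $2$, the component consists of a single arc (since a tournament on two vertices has exactly one arc), and such a tournament is not strongly connected at all. Hence size $2$ cannot occur as the order of a strongly connected component, and size $1$ forces the component to be singular.

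Combining these observations with \Cref{cor:SingIffStrCnnCmpts}, any tournament matrix possessing a strongly connected component of order less than $3$ has a singular strongly connected component and is therefore itself singular. Taking the contrapositive yields the claim: nonsingular tournament matrices have all strongly connected components of order at least $3$.

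I do not anticipate a serious obstacle here; the only subtlety is remembering that a $2$-vertex tournament can never be strong, so the statement is really only nontrivial at size $1$. The entire argument reduces to one line once \Cref{prop:Determinant} (equivalently \Cref{cor:SingIffStrCnnCmpts}) is invoked.
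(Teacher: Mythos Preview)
Your proposal is correct and follows essentially the same approach as the paper: prove the contrapositive, rule out order-$2$ strong components as impossible, observe that an order-$1$ component is the singular matrix $[0]$, and invoke \Cref{cor:SingIffStrCnnCmpts}. The paper's proof is a slightly terser version of exactly this argument.
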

\begin{proof}
We prove the contrapositive. Assume $M$ has a strongly connected component with less than 3 vertices. Since a strongly connected component with exactly 2 vertices is impossible, $M$ has a strongly connected component with exactly 1 vertex, say $M(V_\ell)$. Since $M(V_\ell) = [0]$ is singular, we get that $M$ is singular, as desired.
\end{proof}

Note that if $M \in \T$ is an upset tournament, then $C_3(M) = n-2$ (this may be computed using the score vector formula for $3$-cycles). Burzio showed in \cite{Burzio} that the strongly connected tournaments which contain the least number of $3$-cycles are precisely the upset tournaments. This fact gives us our next theorem. 

\begin{theorem}\label{thm:upset}
If $M$ is a strongly connected tournament matrix of order $n \ge 3$, then $C_3(M) \ge n-2$ and equality holds if and only if $M$ is an upset tournament.
\end{theorem}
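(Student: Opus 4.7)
The plan is to combine the explicit computation of $C_3$ for upset tournaments with Burzio's structural result, both of which are referenced in the paragraph preceding the theorem. The proof splits naturally into two pieces.

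First, I would verify the value $C_3 = n-2$ on upset tournaments using the score vector formula for $3$-cycles. The score vector of an order $n$ upset tournament is $(1,1,2,3,\ldots,n-3,n-2,n-2)$, so
\begin{equation*}
\sum_{i=1}^n \binom{s_i}{2} = 2\binom{1}{2} + \sum_{k=2}^{n-3}\binom{k}{2} + 2\binom{n-2}{2} = \binom{n-2}{3} + 2\binom{n-2}{2},
\end{equation*}
using the hockey stick identity. Expanding $\binom{n}{3}$ twice via Pascal's rule as $\binom{n-2}{3} + \binom{n-2}{2} + \binom{n-1}{2}$ and subtracting yields $C_3(M) = \binom{n-1}{2} - \binom{n-2}{2} = n-2$, confirming the claim flagged in the remark above the theorem.

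Second, I would invoke Burzio's result \cite{Burzio}, which asserts that the strongly connected tournaments minimizing $C_3$ are precisely the upset tournaments. Combining this with the computation above: for every strongly connected $M$ of order $n \ge 3$, one has $C_3(M) \ge C_3(U) = n-2$, where $U$ is any upset tournament of order $n$, and equality holds if and only if $M$ itself is an upset tournament. The base case $n=3$ is consistent, since the unique strongly connected order $3$ tournament is the $3$-cycle, which is both an upset tournament and contains $1 = n-2$ three-cycle.

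Because the structural half of the statement is obtained directly by citation, there is essentially no obstacle in this proof beyond bookkeeping the hockey stick sum; the substantive work is entirely contained in Burzio's theorem. If one instead wanted a self-contained argument, the hard part would be proving Burzio's characterization, for which the natural route is induction on $n$ using the fact that every strong tournament of order at least $4$ admits a vertex whose deletion leaves a strong subtournament, together with a careful analysis of how many $3$-cycles pass through such a vertex; but this goes beyond the scope of the present section.
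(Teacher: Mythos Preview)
Your proposal is correct and matches the paper's approach exactly: the paper does not give a formal proof block for this theorem but instead, in the paragraph immediately preceding it, notes that $C_3(M)=n-2$ for upset tournaments via the score vector formula and then cites Burzio \cite{Burzio} for the characterization of the minimizers. Your write-up simply fills in the details of the hockey-stick computation that the paper leaves implicit.
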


The following theorem gives structural classifications of nonsingular tournament matrices at the nonsingular extreme.

\nonsingularextreme

\begin{proof}
Suppose that $M \in \T\setminus\S$. Let $M(V_1),M(V_2),...,M(V_k)$ be the strongly connected components of $M$ with orders $\ell_1$, $\ell_2$, ..., $\ell_k$, respectively. By \Cref{cor:nonsingularscc3verts}, $\ell_i \ge 3$. By \Cref{thm:upset}, $C_3(M(V_i)) \ge \ell_i-2$. Therefore,
$$C_3(M) = \sum_{i=1}^k C_3(M(V_i)) \ge \sum_{i=1}^k (\ell_i -2) = \sum_{i=1}^k \ell_i - \sum_{i=1}^k 2 = n - 2k.$$ Since $\ell_i \ge 3$, the number of strongly connected components satisfies $k \le \ds\floor*{\frac{n}{3}}$. Therefore,
\begin{equation}\label{eq:inequality}
    C_3(M) \ge n - 2k \ge n - 2\floor*{\frac{n}{3}}.
\end{equation}

For the forwards implication, assume that $M \in \T\setminus\S$ and that equality holds. If not all of the strongly connected components of $M$ are upset tournaments, then $C_3(M) = n - 2\ds\floor*{\frac{n}{3}} > n - 2k$, which contradicts \eqref{eq:inequality}. Thus the strongly connected components are upsets tournaments. \Cref{eq:inequality} forces $C_3(M) = n - 2k = n - 2\ds\floor*{\frac{n}{3}}$ and it follows that $k = \ds\floor*{\frac{n}{3}}$.

For the backwards implication, assume that the strongly connected components of $M$ are upset tournaments and the number of strongly connected components is $k = \ds\floor*{\frac{n}{3}}$. We first show that $M \in \T\setminus\S$. The strongly connected components of $M$ are upset tournaments of order $3$, $4$, or $5$. Thus the strongly connected components of $M$ have score vectors $(1,1,1)$, $(1,1,2,2)$, or $(1,1,2,3,3)$. The only tournament matrix (up to isomorphism) with the score vector $(1,1,1)$ is
\begin{equation*}
F_1 = \begin{bmatrix}
0 & 1 & 0 \\
0 & 0 & 1 \\
1 & 0 & 0
\end{bmatrix}.
\end{equation*}
The only tournament matrix (up to isomorphism) with the score vector $(1,1,2,2)$ is
\begin{equation*}
F_2 = \begin{bmatrix}
0 & 1 & 0 & 0 \\
0 & 0 & 1 & 0 \\
1 & 0 & 0 & 1 \\
1 & 1 & 0 & 0\end{bmatrix}.    
\end{equation*}
The only tournament matrices (up to isomorphism) with the score vector $(1,1,2,3,3)$ are
\begin{equation*}
F_3 = \begin{bmatrix}
0 & 1 & 0 & 0 & 0 \\
0 & 0 & 1 & 0 & 0 \\
1 & 0 & 0 & 1 & 0 \\
1 & 1 & 0 & 0 & 1 \\
1 & 1 & 1 & 0 & 0 \end{bmatrix} \quad \text{and} \quad 
F_4 = \begin{bmatrix}
0 & 1 & 0 & 0 & 0 \\
0 & 0 & 0 & 1 & 0 \\
1 & 1 & 0 & 0 & 0 \\
1 & 0 & 1 & 0 & 1 \\
1 & 1 & 1 & 0 & 0 \end{bmatrix}. 
\end{equation*}
Observe that $\det(F_1) = \det(F_3) = \det(F_4) = 1$ and $\det(F_2) = -1$. Since isomorphic tournament matrices are permutation similar, the determinants of isomorphic tournaments are the same. By \Cref{cor:SingIffStrCnnCmpts}, $M \in \T\setminus\S$. Switching inequalities to equalities in the derivation of \Cref{eq:inequality} yields that $C_3(M) = n - 2\ds\floor*{\frac{n}{3}}.$
\end{proof}

\begin{corollary}
Let $M \in \T\setminus\S$. If $\ds C_3(M) = n - 2\floor*{\frac{n}{3}}$, then 
$$\det(M) = \begin{cases} 
-1, & \text{if } n \equiv 1 \bmod 3 \\
1, & \text{otherwise.} \end{cases}$$
\end{corollary}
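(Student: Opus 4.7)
The plan is to combine the structural characterization from \Cref{thm:nonsingularextreme} with the multiplicative formula for determinants in \Cref{prop:Determinant}, together with the explicit determinant computations of $F_1, F_2, F_3, F_4$ already made in the proof of \Cref{thm:nonsingularextreme}. The whole argument is a bookkeeping exercise on the possible orders of the strongly connected components.

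First I would apply \Cref{thm:nonsingularextreme}: under the hypothesis, the strongly connected components of $M$ are upset tournaments, and there are exactly $k = \floor*{\frac{n}{3}}$ of them. Since an upset tournament of order $\le 5$ is (up to isomorphism) one of $F_1, F_2, F_3, F_4$, every component of $M$ is permutation similar to one of these four matrices. Let $a$, $b$, $c$ denote the number of components of orders $3$, $4$, $5$, respectively. Then
\begin{equation*}
a + b + c = \floor*{\tfrac{n}{3}}, \qquad 3a + 4b + 5c = n,
\end{equation*}
and subtracting $3$ times the first from the second gives $b + 2c = n - 3\floor*{\frac{n}{3}}$.

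Next I would use \Cref{prop:Determinant}, and the fact that permutation similar matrices have the same determinant, to write $\det(M)$ as a product of determinants of copies of $F_1, F_2, F_3, F_4$. The proof of \Cref{thm:nonsingularextreme} records $\det(F_1) = \det(F_3) = \det(F_4) = 1$ and $\det(F_2) = -1$, so $\det(M) = (-1)^b$; only the parity of $b$ matters.

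Finally I would split into three cases according to $n \bmod 3$. If $n \equiv 0$, then $b + 2c = 0$, forcing $b = 0$ and $\det(M) = 1$. If $n \equiv 1$, then $b + 2c = 1$, forcing $b = 1$ and $\det(M) = -1$. If $n \equiv 2$, then $b + 2c = 2$, so $b \in \{0, 2\}$, and in either case $\det(M) = (-1)^b = 1$. There is no real obstacle here; the only mild care needed is the observation that in the $n \equiv 2$ case the decomposition is not unique, yet the parity of $b$ is forced by $b + 2c = 2$, so $\det(M)$ is well-defined independently of which extremal structure $M$ has.
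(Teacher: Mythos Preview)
Your proposal is correct and follows essentially the same approach as the paper: both invoke \Cref{thm:nonsingularextreme} to conclude the components are upset tournaments of orders $3$, $4$, or $5$, then use \Cref{prop:Determinant} together with the precomputed determinants of $F_1,F_2,F_3,F_4$ and case-split on $n\bmod 3$. Your bookkeeping via $a,b,c$ and the observation $\det(M)=(-1)^b$ is a mild streamlining of the paper's explicit case enumeration, but the argument is the same; the one step you leave implicit (that every component has order $\le 5$) follows immediately from $\sum_i(\ell_i-3)=n-3\lfloor n/3\rfloor\le 2$, which the paper also takes for granted.
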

\begin{proof}
If $n \equiv 0 \bmod 3$, then the strongly connected components are all isomorphic to $F_1$. Since isomorphic tournament matrices are permutation similar, their determinants are the same. By \Cref{prop:Determinant}, $\det(M) = \det(F_1) \cdots \det(F_1) = 1$. If $n \equiv 1 \bmod 3$, then all but one strongly connected component is isomorphic to $F_1$. The other has 4 vertices and must be isomorphic to $F_2$. Therefore, $\det(M) = \det(F_1) \cdots \det(F_1)\det(F_2) = -1$. If $n \equiv 2 \bmod 3$, then at most two strongly connected components are not isomorphic to $F_1$. Either they both have $4$ vertices or one has $3$ and the other has $5$. In the former case, both are isomorphic to $F_2$ and hence $\det(M) = \det(F_1) \cdots \det(F_1)\det(F_2)\det(F_2) = 1$. In the latter case, one is isomorphic to $F_1$ and the other is either isomorphic to $F_3$ or $F_4$. Nonetheless, $\det(M) = \det(F_1) \cdots \det(F_1)\det(F_3) = \det(F_1) \cdots \det(F_1)\det(F_4) = 1$.
\end{proof}

This corollary shows that if $M$ is nonsingular and minimizes $C_3$, then $M$ also minimizes the determinant magnitude-wise. This corollary also shows that the nonsingular miminizer of $C_3$ are \ul{unimodular} (i.e. have determinant $\pm 1$). Recall that a matrix is \ul{totally unimodular} if every square submatrix has determinant $\pm 1$ or $0$. Singular minimizers of $C_3$ are not totally unimodular, but they are \textit{almost} totally unimodular.

\begin{corollary}
Let $M \in \T\setminus\S$. If $\ds C_3(M) = n - 2\floor*{\frac{n}{3}}$, then every subtournament matrix of $M$ has determinant $\pm 1$ or $0$.
\end{corollary}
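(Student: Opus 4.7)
The plan is to leverage the structural classification from \Cref{thm:nonsingularextreme}: the strongly connected components $M(V_1),\ldots,M(V_k)$ of $M$ are each isomorphic to one of the four fixed tournament matrices $F_1,F_2,F_3,F_4$, whose orders are at most $5$. Let $V' \subseteq V(M)$ and write $N = M[V']$ for the corresponding subtournament matrix. Between any two distinct SCCs $V_i$ and $V_j$ of $M$, every arc points in the same direction (otherwise $V_i \cup V_j$ would itself be strongly connected). Ordering the SCCs consistently with a topological ordering of the condensation therefore makes $M$ block upper triangular, and the same ordering makes $N$ block upper triangular with diagonal blocks $N_i := M(V_i)[V' \cap V_i]$.

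Since the determinant of a block triangular matrix is the product of the determinants of its diagonal blocks (the same observation underlying \Cref{prop:Determinant}), we obtain
$$\det(N) = \prod_{i=1}^{k} \det(N_i).$$
Hence it suffices to show that every induced subtournament of $F_1, F_2, F_3, F_4$ has determinant in $\{-1,0,1\}$. This reduces the problem to a finite verification that no longer depends on $n$.

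To carry out the finite check cleanly, I would first note the standing observations: the empty matrix has determinant $1$; every order-$1$ principal submatrix of a tournament is $[0]$ and has determinant $0$; every order-$2$ principal submatrix has a zero row and has determinant $0$; and every order-$3$ induced subtournament is either transitive (determinant $0$) or a $3$-cycle isomorphic to $F_1$ (determinant $1$). This disposes of all subtournaments of $F_1$ and of orders $\le 3$ inside $F_2, F_3, F_4$. What remains is a short enumeration: the orders-$4$ principal submatrices of $F_2, F_3, F_4$ and the order-$5$ principal submatrices of $F_3, F_4$ (namely $F_3$ and $F_4$ themselves, with determinant $1$). Each order-$4$ case can be dispatched by cofactor expansion along a row of Hamming weight one or two.

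The main obstacle is simply keeping the finite case check organized rather than tedious; the conceptual content lies entirely in the block-triangular reduction, after which the bounded size of the SCCs of $M$ makes the rest routine.
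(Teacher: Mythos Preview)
Your approach is correct and essentially the same as the paper's: both reduce $\det(N)$ to a product over pieces lying inside the small building blocks $F_1,\ldots,F_4$ and then finish with a finite check. The only cosmetic difference is that the paper decomposes $N$ into its own strongly connected components (each a subtournament of some $F_j$) and appeals to the stronger assertion that the $F_j$ are totally unimodular, whereas you use the block-triangular structure inherited directly from $M$'s condensation and only need the weaker fact about principal subtournaments; your route is slightly more economical but not genuinely different.
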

\begin{proof}
Let $N$ be a subtournament of $M$. Then $N$ is obtained by deleting a set of vertices in $M$. Thus the strongly connected components of $N$ are subtournaments of the corresponding strongly connected components of $M$. It is easy to check that $F_1$, $F_2$, $F_3$, and $F_4$ are totally unimodular. Therefore the strongly connected components of $N$ have determinants $\pm 1$ or $0$. Therefore $N$ has determinant $\pm 1$ or $0$.
\end{proof}

\end{document}